\newcommand{\bbZ}{\mathbb{Z}}
\newcommand{\bbR}{\mathbb{R}}
\newcommand{\bbQ}{\mathbb{Q}}
\newcommand{\bbP}{\mathbb{P}}
\newcommand{\bbF}{\mathbb{F}}
\newcommand{\bbA}{\mathbb{A}}
\newcommand{\bbN}{\mathbb{N}}
\newcommand{\calO}{\mathcal{O}}
\newcommand{\calH}{\mathcal{H}}
\newcommand{\calG}{\mathcal{G}}
\newcommand{\calP}{\mathcal{P}}
\newcommand{\gl}{\mathfrak{gl}}
\newcommand{\red}{\operatorname{red}}
\newcommand{\Ndim}{\operatorname{Ndim}}
\newcommand{\Spec}{\operatorname{Spec}}
\newcommand{\GL}{\mathrm{GL}}
\newcommand{\SL}{\mathrm{SL}}
\newtheorem{thm}{Theorem}
\newtheorem{prop}[thm]{Proposition}
\newtheorem{defn}[thm]{Definition}
\newtheorem{lem}[thm]{Lemma}
\newtheorem*{claim*}{Claim}
\begin{document}

\title{P-adic Nori Theory}
\author{Michael Larsen}
\address{Michael Larsen,
Department of Mathematics,
Indiana University,
Bloomington, IN
U.S.A. 47401}
\thanks{The author was partially supported by NSF grants DMS-0354772
and DMS-0800705.}

\subjclass{20G25} 
\keywords{$p$-adic group, Nori's theorem}
\begin{abstract}
Given a fixed integer $n$, we consider closed subgroups $\calG$ of $\GL_n(\bbZ_p)$, where
$p$ is sufficiently large in terms of $n$.
Assuming that the Zariski closure of $\calG$ in $\GL_n$ has no toric part, we give a condition on 
the (mod $p$) reduction of $\calG$ which guarantees that $\calG$ is of bounded index in
$\GL_n(\bbZ_p)\cap G(\bbQ_p)$.
\end{abstract}

\maketitle

In \cite{No}, Nori considered a special class of subgroups of $\GL_n(\bbF_p)$, namely groups which are generated by elements of order $p$ or, as we shall say, \emph{$p$-generated groups}.  He showed that if
$p$ is sufficiently large in terms of $n$, there is a correspondence between $p$-generated groups and a
certain class of connected algebraic groups which he called \emph{exponentially generated}.  In particular, every $p$-generated group $\Gamma$ is a subgroup of $G(\bbF_p)$ for the corresponding algebraic group $G$, and $[G(\bbF_p):\Gamma]$ is bounded by a constant depending only on $n$.  The $p$-generated groups are admittedly rather special, but on the other hand, every finite subgroup
$\Gamma \subset \GL_n(\bbF_p)$ contains a $p$-generated normal subgroup, $\Gamma^+$,
of prime-to-$p$ index, which shows that every $\Gamma$ can be related to a connected algebraic group in a weak sense.  This construction can serve in some measure as a substitute for the (identity component of the) 
Zariski-closure in the setting of finite linear groups, where the actual identity component of the Zariski-closure of $\Gamma$
is always trivial.  

In this paper we consider closed subgroups $\calG$ 
of the compact $p$-adic Lie group $\GL_n(\bbZ_p)$.
In this setting, of course, Zariski-closure behaves well, so we do not need a substitute.  Nevertheless, it turns out that there is an interesting class of groups $\calG$ for which we can prove a bounded index result analogous to that of Nori.  

Throughout the paper, $n$ will denote a positive integer and $F$ a field.   If $F$ is of characteristic $p>0$, we assume $p\ge n$, so $i!$ is non-zero for $i < n$.
As every nilpotent element $x\in M_n(F)$ satisfies
$x^n = 0$, the truncated exponential function
$$\exp(x) := \sum_{i=0}^{n-1} \frac{x^i}{i!}$$
satisfies $\exp(x+y) = \exp(x)\exp(y)$ for every pair $x,y$ of commuting nilpotent matrices.
Moreover $\exp(x)-1$ is nilpotent, so $\exp(x)$ is unipotent.  Conversely, if $u$ is unipotent, $1-u$ is nilpotent, so 
$$\log(u) := -\sum_{i=1}^{n-1} \frac{(1-u)^i}{i}$$
is nilpotent, and $\log$ and $\exp$ set up mutually inverse bijections between the unipotent and
nilpotent $n\times n$ matrices over $F$.  In the positive characteristic case, every unipotent element 
$u\neq 1$ is of order $p$, and conversely, every element of order $p$ is unipotent (because this is true for every Jordan block of order $\le p$).

For every nilpotent element $x\in M_n(F)$, there exists a morphism of algebraic groups
$\phi_x\colon \bbA^1\to \GL_n$
defined by
$$\phi_x(t) := \exp(t x).$$
If $x\neq 0$, this morphism is injective, and its image is isomorphic to $\bbA^1$.
If $N$ is a set of nilpotent elements of $M_n(F)$, let $G_N$ denote the subgroup of
$\GL_n$ generated by $\phi_x(\bbA^1)$ for all $x\in N$, i.e., the intersection of all
algebraic subgroups of $\GL_n$ which contain
$$\bigcup_{x\in N} \phi_x(\bbA^1).$$
Following Nori we say that an algebraic subgroup of
$\GL_n$ over a field $F$ is \emph{exponentially generated} if it is of the form $G_N$ for some
$N\subset M_n(F)$.

\begin{prop} 
\label{eg-implies-sbi}
Every exponentially generated group is the extension of
a semisimple group by a unipotent group.
\end{prop}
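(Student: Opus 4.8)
The plan is to reduce the statement to two assertions about $G := G_N$: that $G$ is connected, and that its reductive quotient is semisimple. For a connected linear algebraic group these together give exactly the desired structure. Indeed, $G$ has a unipotent radical $R_u(G)$ (the maximal connected normal unipotent subgroup), $G/R_u(G)$ is reductive, and saying that $G/R_u(G)$ is \emph{semisimple} is the same as saying that the radical $R(G)$ coincides with $R_u(G)$, i.e.\ is unipotent. In that case the canonical sequence $1\to R_u(G)\to G\to G/R_u(G)\to 1$ exhibits $G$ as an extension of the semisimple group $G/R_u(G)$ by the unipotent group $R_u(G)$. Connectedness is immediate from the definition of $G_N$: each $\phi_x(\bbA^1)$ is the image of the connected variety $\bbA^1$ and contains the identity, and the subgroup of $\GL_n$ generated by a family of such connected subvarieties through $e$ is closed and connected by the standard result on groups generated by connected subsets.

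The core of the argument is to show that $G$ has no nontrivial character, i.e.\ that the character group $X^*(G):=\Hom(G,\GL_1)$ vanishes. First I would note that each $\phi_x$ is a homomorphism of algebraic groups from $\bbA^1$ (with its additive structure) to $\GL_n$: the relation $\exp(sx)\exp(tx)=\exp((s+t)x)$ holds precisely because $sx$ and $tx$ are commuting nilpotents. Hence for any $\chi\in X^*(G)$ the composite $\chi\compose\phi_x$ is a homomorphism $\bbA^1\to\GL_1$. But every morphism $\bbA^1\to\GL_1$ is a nowhere-vanishing regular function on $\bbA^1$, hence a nonzero constant, and the homomorphism property forces that constant to be $1$; thus $\Hom(\bbA^1,\GL_1)=0$ in every characteristic. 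Therefore $\chi$ is trivial on each $\phi_x(\bbA^1)$, and since these subgroups generate $G$ we conclude $\chi=1$, so $X^*(G)=0$.

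It then remains to deduce semisimplicity of the reductive quotient from $X^*(G)=0$. Working over $\bar F$ (the conclusion is geometric), set $H=G/R_u(G)$, a connected reductive group. Pullback along the surjection $G\twoheadrightarrow H$ injects $X^*(H)$ into $X^*(G)$, so $X^*(H)=0$. For a connected reductive group the character group has rank equal to the dimension of the maximal central torus $Z(H)^{\circ}$ — concretely, every character of $H$ kills $[H,H]$ and so factors through the torus $H/[H,H]$, which is isogenous to $Z(H)^{\circ}$. Hence $X^*(H)=0$ forces $Z(H)^{\circ}=1$, which is exactly the statement that $H$ is semisimple. This completes the reduction.

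I expect the only genuinely delicate point to be the passage through the structure theory in characteristic $p$: one must be sure that $G$ is smooth and that $R_u(G)$ and the reductive quotient behave as in the characteristic-zero setting. This is handled by passing to $\bar F$, where $G$, being generated by the smooth connected subgroups $\phi_x(\bbA^1)$, is smooth and connected and the usual structure theory of reductive groups applies. The character computation itself — the crux of the proof — is entirely characteristic-free, which is what makes the argument robust.
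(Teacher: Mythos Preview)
Your proof is correct and follows essentially the same approach as the paper: both arguments observe that $G$ is connected (being generated by connected subvarieties through the identity) and then reduce semisimplicity of the reductive quotient to the vanishing of $\Hom(\bbA^1,\GL_1)$. The only cosmetic difference is that the paper phrases this last step as ``no nontrivial torus is generated by additive groups,'' working directly with the reductive quotient, whereas you phrase it as $X^*(G)=0$ via characters; these are equivalent formulations of the same observation.
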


\begin{proof}
It is clear that every quotient group of an exponentially generated group must be
generated by subgroups isomorphic to the additive group. In
particular exponentially generated groups must be connected, and every reductive exponentially generated group must be semisimple since no nontrivial torus is generated by
additive groups. It follows that the quotient of any exponentially generated group by
its unipotent radical is  semisimple.
\end{proof}

In general, the converse of Proposition~\ref{eg-implies-sbi} is not true.  For example, if $F=\bbR$,
$\GL_n$ contains compact semisimple subgroups which have no non-trivial unipotent elements.
If $F$ is of positive characteristic, even if it is algebraically closed,
the image of $\SL_2$ under the $4$-dimensional representation which is the direct sum of the standard representation and its Frobenius twist
fails to be exponentially generated.
In characteristic zero, we have a precise criterion for exponential generation.

\begin{prop}
\label{sbi-implies-eg}
Let $F$ be a field of characteristic zero.
An algebraic subgroup $G$ of $\GL_n$ defined over $F$
is exponential generated if and only if it has 
has no non-trivial finite, toric, or anisotropic quotient group.
\end{prop}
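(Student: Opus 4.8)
\section*{Proof proposal}

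The plan is to turn each of the three forbidden quotient types into a structural condition and then match these against what exponential generation forces. The dictionary is: ``no nontrivial finite quotient'' is equivalent to $G$ being connected (a finite quotient of a connected group is trivial, and $G\twoheadrightarrow G/G^\circ$ is a nontrivial finite quotient otherwise); ``no nontrivial toric quotient'' is, for connected $G$, equivalent to the reductive quotient $\bar G:=G/R_u(G)$ being semisimple (a map $G\to T$ into a torus kills $R_u(G)$ and the derived group, so it factors through the abelianization of $\bar G$, which is trivial exactly when $\bar G$ is semisimple); and ``no nontrivial anisotropic quotient'' will encode that $G$ has enough $F$-rational unipotents to see every semisimple factor, via the Borel--Tits isotropy criterion. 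With this dictionary, the forward implication is quick: if $G=G_N$, then Proposition~\ref{eg-implies-sbi} gives that $G$ is connected with $\bar G$ semisimple, ruling out finite and toric quotients; and if $q\colon G\twoheadrightarrow Q$ is any quotient, then $Q$ is generated by the images $q(\phi_x(\bbA^1))$, each a quotient of the additive group $\mathbb{G}_a$ over $F$ and hence a (possibly trivial) $F$-rational unipotent subgroup, so an anisotropic $Q$ would contain no such nontrivial subgroup and must be trivial.

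The substance is the converse. Assume $G$ has no nontrivial finite, toric, or anisotropic quotient, so $G$ is connected and $\bar G$ is semisimple. Let $N$ be the set of all nilpotent $x\in M_n(F)$ with $\phi_x(\bbA^1)\subseteq G$; this is the largest such set, so $G_N$ is the largest exponentially generated subgroup of $G$, and the goal is $G_N=G$. First I would check that $G_N$ is normal: conjugation by $g\in G(F)$ sends $\phi_x(\bbA^1)$ to $\phi_{\operatorname{Ad}(g)x}(\bbA^1)\subseteq G$ with $\operatorname{Ad}(g)x$ again nilpotent and $F$-rational, so $G(F)$ normalizes $G_N$, and $G(F)$ is Zariski-dense as $G$ is connected and $F$ is infinite. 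Next, $R_u(G)\subseteq G_N$: its Lie algebra is defined over $F$ and consists of nilpotents, and a connected unipotent group in characteristic zero is generated by the one-parameter subgroups $\exp(tx)$ as $x$ ranges over the $F$-points of its Lie algebra.

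It then remains to show that the image $\overline{G_N}$ of $G_N$ in $\bar G$ is all of $\bar G$. I would decompose the semisimple group $\bar G$ as an almost-direct product of its $F$-simple factors. Each $F$-isotropic factor is generated by its $F$-rational unipotent subgroups (the unipotent radicals of a pair of opposite parabolic $F$-subgroups); letting $\bar G^+\subseteq\bar G$ be the subgroup they generate, the quotient $\bar G/\bar G^+$ is isogenous to the product of the anisotropic factors, hence anisotropic, and is a quotient of $G$, so by hypothesis it is trivial. Thus $\bar G=\bar G^+$ is generated by its $F$-rational copies of $\mathbb{G}_a$. Each such $U\cong\mathbb{G}_a$ lifts: its preimage $\tilde U\subseteq G$ is an extension of $U$ by the unipotent group $R_u(G)$, hence unipotent, so I may pick a nilpotent $x\in\operatorname{Lie}(\tilde U)(F)$ mapping onto a generator of $\operatorname{Lie}(U)$; then $x\in N$ and $\phi_x(\bbA^1)$ maps onto $U$. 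Hence $\overline{G_N}\supseteq\bar G^+=\bar G$, and combined with $R_u(G)\subseteq G_N$ this gives $G_N=G$.

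I expect the main obstacle to be the two appeals to Borel--Tits isotropy theory --- that an anisotropic reductive $F$-group has no nontrivial $F$-rational unipotent subgroup, and that an isotropic $F$-simple group is generated by the $F$-unipotents of its parabolics --- together with the bookkeeping needed to keep every subgroup and Lie-algebra element $F$-rational throughout. The characteristic-zero lifting of a unipotent subgroup of $\bar G$ through $R_u(G)$ is routine but must be carried out over $F$, and the density argument for normality is the one place where connectedness of $G$ is genuinely used.
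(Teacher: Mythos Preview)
Your proof is correct and follows the same overall architecture as the paper's: establish the forward direction from the fact that additive groups cannot map nontrivially to finite, toric, or anisotropic groups; for the converse, show that $R_u(G)$ lies in the exponentially generated subgroup, and then that the semisimple quotient $\bar G=G/R_u(G)$ is generated by $F$-rational unipotent subgroups which lift through $R_u(G)$.

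The execution differs in two places. For unipotent $F$-subgroups, the paper runs a composition series and invokes Steinberg's vanishing of $H^1(F,U_i)$ to produce $F$-rational elements at each stage; you instead use the exponential bijection $\operatorname{Lie}(U)\to U$ in characteristic zero directly, which is cleaner here. For the semisimple quotient, the paper takes \emph{all} proper parabolic $F$-subgroups of $\bar G$, lets $K$ be generated by the pullbacks of their unipotent radicals, uses Chevalley's density theorem to see $K\trianglelefteq G$, and argues by contradiction: if $K\neq G$ then $G/K$ is a nontrivial isotropic semisimple quotient, whose parabolics pull back to parabolics of $\bar G$ already absorbed into $K$, forcing $R_u$ of the image parabolic to be trivial. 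You instead decompose $\bar G$ into its $F$-simple factors, show each isotropic factor is generated (as an algebraic group) by the unipotent radicals of a pair of opposite parabolics, and use the no-anisotropic-quotient hypothesis to eliminate the anisotropic factors in one stroke. Your route is more structural and makes the role of the hypothesis very transparent; the paper's is more uniform in that it never needs the almost-direct-product decomposition or the fact that opposite unipotent radicals generate an $F$-simple group (a fact you correctly flag as the main Borel--Tits input, and which does require an argument: the subgroup they generate is normalized by the common Levi, hence by both opposite parabolics, hence normal, hence everything by $F$-simplicity).

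One minor remark: you prove $G_N\trianglelefteq G$ via density of $G(F)$, but you never actually use normality---once $R_u(G)\subseteq G_N$ and $G_N$ surjects onto $\bar G$, you already have $G_N=G$. The density statement itself is the same Chevalley theorem the paper cites.
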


\begin{proof}
As there is no non-trivial homomorphism from an additive group to a finite, toric, 
or anisotropic group, one direction is clear.  For the other,
let $U$ denote a unipotent $F$-subgroup of $G$.  Thus $U$ has a composition
series 
$$U = U_0\supset U_1\supset\cdots\supset U_s = \{e\}$$ 
with each $U_i/U_{i+1}$
isomorphic to the additive group.  By Steinberg's theorem \cite{St},
$H^1(F,U_i) = 0$ for all $i$, so for each 
$1\le i\le s$ we have a short exact sequence
$$0\to U_i(F)\to U_{i-1}(F)\to F\to 0,$$
and there exists $u_{i-1}\in U_{i-1}(F)\setminus U_i(F)$.
As $F$ is of characteristic zero, 
$$\langle u_i\rangle \subset U_{i-1}(F)\cap \phi_{\log(u_i)}(F)$$
is isomorphic to $\bbZ$, so $\phi_{\log(u_i)}(\bbA^1)\cap U_{i-1}$ has dimension $1$, which means
$\phi_{\log(u_i)}(\bbA^1)\subset U_{i-1}$.
It follows that
$$\phi_{\log(u_i)}(\bbA^1)U_i = U_{i-1}.$$
Thus, by descending induction,
$$U = \prod_{i=1}^s\phi_{\log(u_{i-1})}(\bbA^1).$$

Let $H$ denote the quotient of $G=G^\circ$ by its unipotent radical $N$.
As $H$ is isotropic, the set $\calP$ of its
proper parabolic $F$-subgroups is non-empty.
For each $P\in\calP$, let $U_P$ denote the inverse image in $G$ of the
unipotent radical of $P$.
Thus each $U_P$ is a unipotent $F$-subgroup of $G$ containing $N$.  
Each is therefore exponentially generated.  Let $K\subset G$ be the (exponentially generated group)
generated by all $U_P$.  Thus $K$ is normalized by the inverse image of $H(F)$
in $G$.
By a theorem of Chevalley \cite{Ch}, $H(F)$ is Zariski-dense in $H$, so $K$ is normal in
$G$.  Thus $G/K$ is isomorphic to a quotient $H/(K/N)$, which is isotropic.
It follows that $\calP$ contains a proper parabolic $F$-subgroup not contained in $K/N$, contrary
to assumption.  Thus $K=G$, and $G$ is exponentially generated.

\end{proof}

We say that a Lie algebra is \emph{nilpotently generated} if it is spanned by its nilpotent elements.
Nori proved \cite[Theorem~A]{No} that if $F$ is of characteristic zero or characteristic 
$p$ sufficiently large in terms of $n$, the $\log$ and $\exp$ maps give
mutually inverse bijections, described more explicitly below, 
between exponentially generated $F$-subgroups of $\GL_n$
and nilpotently generated $F$-subalgebras of the Lie algebra $M_n = \gl_n$.

The following proposition allows us to put all 
exponentially generated subgroups (as well, possibly,  as other
subvarieties of $\GL_n$) into a family over a base of finite type.  It is convenient to 
work projectively, by embedding $\GL_n$ into $\bbP^{n^2}$.
For any scheme $Z$ and any closed subvariety $K$ of $\GL_{n,Z}$, we denote by $\bar K$
the closed subset $Z\cup (\bbP^{n^2}_Z\setminus \GL_{n,Z})$ endowed with its reduced induced scheme structure.

\begin{prop}\label{exp-flat} For every positive integer $n$ 
there exists an integer $N$ and a finite set $S$ of polynomials
such that for every field $F$
over $\bbZ[1/N]$ and every exponentially generated subgroup
$G_F\subset \GL_{n, F}$, the Hilbert polynomial of $\bar G_F$ belongs to $S$.
\end{prop}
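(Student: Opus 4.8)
The plan is to exhibit every group $\overline{G_F}$ as a fibre of one of finitely many projective families over Noetherian bases, and then to invoke the constructibility of Hilbert polynomials in projective families.

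First I would bound the number of generators. By the standard fact that a subgroup of an algebraic group generated by a family of irreducible subvarieties through the identity is closed, connected, and already equal to a product of boundedly many of them (at most twice its dimension), and since $\dim G_F\le n^2$, there is an integer $m=2n^2$, depending only on $n$, such that for every field $F$ of characteristic $0$ or of characteristic $p\ge n$ and every exponentially generated $G_F\subset\GL_{n,F}$ one may choose nilpotent $x_1,\dots,x_m\in M_n(F)$ (padding with $x_i=0$ if fewer are needed) with
\[
\mu_{x_1,\dots,x_m}\colon\bbA^m_F\to\GL_{n,F},\qquad (t_1,\dots,t_m)\mapsto\exp(t_1x_1)\cdots\exp(t_mx_m),
\]
having image exactly $G_F$. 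Here each $\phi_{x_i}(\bbA^1)$ is a closed subgroup through the identity, so the generation fact applies and the image is $G_F$ itself, not merely its closure. Choosing $N=(n-1)!$ makes $i!$ invertible for $i<n$, so $\exp$, and hence $\mu$, is given by polynomials over $R:=\bbZ[1/N]$, over precisely the fields allowed in the statement.

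Next I would globalise. Let $\mathcal{N}\subset M_n=\bbA^{n^2}$ be the nilpotent cone, a closed subscheme over $\bbZ$, and put $B:=\mathcal{N}^m$, of finite type over $R$. The same formulas define an $R$-morphism $P\colon\bbA^m_B\to\GL_{n,B}\hookrightarrow\bbP^{n^2}_B$ over $B$ whose fibre over $b=(x_1,\dots,x_m)$ is $\mu_{x_1,\dots,x_m}$. Let $\mathcal{I}\subset\GL_{n,B}$ be the scheme-theoretic image of $P$, let $\overline{\mathcal{I}}$ be its closure in $\bbP^{n^2}_B$, and set
\[
\mathcal{Y}:=\bigl(\overline{\mathcal{I}}\cup(\bbP^{n^2}_B\setminus\GL_{n,B})\bigr)_{\red},
\]
a closed subscheme of $\bbP^{n^2}_B$, proper over $B$. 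Adjoining the entire complement of $\GL_{n,B}$, that is, the fixed hypersurface $\{\det=0\}$, normalises the behaviour at infinity, so that a fibre $\mathcal{Y}_b$ agrees with $\overline{G_F}$ (in the sense defined just before the proposition) precisely when $\overline{\mathcal{I}}_b$ is the closure of the image of $P_b$; in particular this holds at every point $b$ for which that image is the closed subgroup $G_F$.

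Finally I would read off $S$. For a projective morphism to a Noetherian base the Hilbert polynomial of the fibres is constructible, hence takes finitely many values, so $\mathcal{Y}\to B$ already contributes finitely many polynomials; the real content is the identification $\mathcal{Y}_b=\overline{G_F}$ at the points $b$ where $P_b$ generates $G_F$. I expect this identification to be the main obstacle, because the scheme-theoretic image does not commute with arbitrary base change, and at special $b$ the fibre $\mathcal{Y}_b$ may properly contain $\overline{G_F}$. To handle it I would use generic flatness together with Noetherian induction on $\dim B$: at a generic point of $B$ the base change is flat, so $\overline{\mathcal{I}}_b$ is the closure of the scheme-theoretic image of $P_b$, and by spreading out this persists over a dense open $U\subseteq B_{\red}$, where consequently $\mathcal{Y}_b=\overline{G_F}$ for every generating $b$. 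Removing $U$ and repeating the whole construction over $(B\setminus U)_{\red}$, a scheme of strictly smaller dimension, the process terminates after finitely many stages; the union of the finitely many fibrewise Hilbert polynomials produced at each stage is a finite set $S$ containing the Hilbert polynomial of $\overline{G_F}$ for every exponentially generated $G_F$, as required.
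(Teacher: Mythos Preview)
Your approach is essentially the paper's: parametrize by tuples of nilpotents, form the global product-of-exponentials map into $\bbP^{n^2}$, adjoin the hypersurface $\bbP^{n^2}\setminus\GL_n$, use Noetherian induction to make fibrewise image agree with the image of the fibre, and conclude by constructibility of Hilbert polynomials. Two minor differences: the paper restricts its base to $n^2$-tuples that span a Lie subalgebra (via Nori's bijection between exponentially generated groups and nilpotently generated Lie algebras), whereas you take all $2n^2$-tuples in $\mathcal N^{2n^2}$; either works, and yours avoids invoking Nori's theorem at this stage. The paper also isolates the Noetherian-induction step as a separate lemma.

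One point you gloss over is worth flagging. You need $\mathcal Y_b=\bar G_F$ \emph{as schemes}, since Hilbert polynomials see nilpotents; but taking $(\cdot)_{\red}$ over $B$ does not force fibres to be reduced, and generic flatness alone does not give this either. The paper handles this by arguing that at a characteristic-zero generic point of the base the fibre is reduced, hence geometrically reduced, and then spreading this out; generic points in positive characteristic are dealt with by enlarging $N$ to kill those primes. In particular your choice $N=(n-1)!$ is only enough to make $\exp$ defined, not to guarantee reduced fibres throughout the induction; you should expect to enlarge $N$ at each stage.
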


\begin{proof}
We prove that there exists a positive integer $N$ and a morphism $Y'\to X'$
of schemes of finite type over $\bbZ$ such that for all $F$ whose characteristic does
not divide $N$ and all exponentially generated $G_F\subset \GL_{n,F}$,
there exists $x'\in X'(F)$ with $Y'_{x'} = \bar G_F$.
By \cite[\S2]{SB221}, the set of Hilbert polynomials for the $\bar G_F$ is therefore finite.

We begin by trying to parametrize nilpotently generated Lie algebras.
The set of $k$-tuples of nilpotent $n\times n$ matrices which span a Lie
subalgebra of $n\times n$ matrices is constructible because Lie algebra closure can be expressed as the existence
of a set of $k^3$ structure constants for the Lie bracket.  
Let $N_n/\bbZ$ denote the scheme of nilpotent $n\times n$ matrices and
$W\subset N^{n^2}_n$ the constructible set of ordered $n^2$-tuples of
nilpotent matrices spanning a Lie algebra.  
Replacing $W$ with the disjoint union $X$ of 
the strata of a suitable stratification, we get a scheme 
indexing $n^2$-tuples of nilpotent matrices which span nilpotent Lie algebras.
Thus, for every field $F$ of characteristic zero or characteristic $p$ sufficiently large
and every nilpotently generated Lie algebra $L\subset \gl_n$ over $F$, there exists
$x\in X(F)$ which indexes a spanning set of $L$.

We choose $N$ sufficiently divisible that
outside of characteristics dividing $N$, there is
a bijection between exponentially generated
subgroups $G$ of $\GL_n$ and nilpotently generated Lie subalgebras $L$ of
$\gl_n$, given by the mutually inverse maps
sending $G$ to its Lie algebra and $L$ to the group generated by $\phi_x(\bbA^1)$
for all nilpotent $x\in L$.  In particular, $\phi_{x_i}(\bbA^1)$ generates $G$ whenever
$x_1,\ldots,x_{n^2}$ is a nilpotent spanning set of $L$.  
From the scheme $X$ indexing all possible $n^2$-tuples,
we would like to obtain a scheme of finite type over $\bbZ[1/N]$ indexing all $\bar G_F$, where $G_F$ ranges over exponentially generated groups and $F$ ranges over fields over
$\bbZ[1/N]$.

Recall \cite[Proposition~2.2]{Bo} that if $V\subset G\subset \GL_n$ is any connected generating subvariety of an algebraic group $G$, 
the image of $V^{n^2}$ under the multiplication map is dense in $G$,
and the image of $V^{2n^2}$ is exactly $G$ .
This implies
$$(\phi_{x_1}(\bbA^1)\cdots\phi_{x_{n^2}}(\bbA^1))^{2n^2} \twoheadrightarrow G.$$

Let $Y:= \bbP^{n^2}_X$ and
$$Z := (\bbP^{n^2}_{X}\setminus \GL_{n,X})
\coprod (X\times \bbA^{2n^4}).$$
We define $\xi\colon Z \to Y$
by extending the obvious inclusion map on the first component of $Z$ by
$$\xi((x_1,\ldots,x_{n^2}),(t_{1,1},\ldots,t_{n^2,2n^2})) := 
((x_1,\ldots,x_{n^2}),\prod_{j=1}^{2n^2} \prod_{i=1}^{n^2} \phi_{x_i}(t_{i,j})).$$
For each $F$ and each $x\in X(F)$, the image of the map of fibers 
$Z_x\to Y_x = \bbP^{n^2}_F$ is the union of $\bbP^{n^2}_F\setminus \GL_{n,F}$ 
and the exponential subgroup of $\GL_{n,F}$
in correspondence with the nilpotently generated Lie subalgebra 
of $\gl_n(F)$ associated to $x$.  
The following lemma now implies the proposition.

\end{proof}

\begin{lem}
Let  $m$ be a positive integer, $X$ a scheme of finite type over $\bbZ$, $Y$ a closed subscheme of
$\bbP^m_X$, and $\xi\colon Z\to Y$ a morphism of finite type such that $\xi(Z_x)$ is a closed subset of $Y_x$ for all $x\in X$.
There exists $N\in\bbN$, a morphism $\psi\colon X'\to X$, and a closed subscheme
$Y'\subset \bbP^m_{X'}$ such that for every field $F$ over $\bbZ[1/N]$ and every $x\in X(F)$,
there exists $x'\in X'_x(F)$ such that $Y'_{x'} = \xi(Z_x)^{\red}$.
\end{lem}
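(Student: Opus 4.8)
The plan is to argue by Noetherian induction on $X$, reducing the construction of $(N,\psi,Y')$ to the behavior over a dense open subset of each irreducible component. Since $X$ is of finite type over $\bbZ$ it is Noetherian with finitely many irreducible components, so it suffices to produce, for each integral closed subscheme $T\subseteq X$ (taken reduced) with generic point $\eta$ and function field $K=\kappa(\eta)$, an integer $N_T$, a finite-type morphism $\psi_T\colon X'_T\to T$, and a closed $Y'_T\subseteq\bbP^m_{X'_T}$ solving the problem over a dense open $U_T\subseteq T$; the complement $T\setminus U_T$ is handled by the inductive hypothesis, and the finitely many pieces are assembled by setting $X'=\coprod X'_T$, $\psi=\coprod\psi_T$, $Y'=\coprod Y'_T\subseteq\bbP^m_{X'}=\coprod\bbP^m_{X'_T}$, and $N=\mathrm{lcm}$ of the $N_T$. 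The reason for allowing the auxiliary base change $\psi$ and for inverting $N$ is that, although the underlying \emph{set} of the fiberwise image is easy to control, producing a family whose fibers are \emph{reduced} will force a purely inseparable cover in positive characteristic.

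First I would control the image set-theoretically. For every $x\in X$ the underlying set of $\xi(Z_x)$ equals $\xi(Z)\cap Y_x$, since $y\in Y_x$ lies in $\xi_x(Z_x)$ iff $\xi^{-1}(y)\neq\emptyset$, i.e.\ iff $y\in\xi(Z)$; thus the set-theoretic image commutes with passage to fibers. By Chevalley's theorem $\xi(Z)\subseteq Y$ is constructible, hence contains a dense open subset of its closure; let $W=\overline{\xi(Z)}$ with its reduced structure and $C=W\setminus\xi(Z)$, a constructible set contained in a proper closed subset of $W$. Applying generic flatness to $Z\to T$, to $W\to T$, and to the closure of $C\to T$, and discarding the components of $C$ not dominating $T$, I shrink $T$ to a dense open over which all three are flat and $\dim C_x<\dim W_x$ for every $x$. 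On this open, the closed set $\xi(Z_x)=\xi(Z)\cap Y_x$ contains the dense open subset $W_x\setminus C_x$ of $W_x$, hence is dense in $W_x$; as $\xi(Z_x)$ is closed by hypothesis, $W_x^{\red}=\xi(Z_x)^{\red}$ as closed subschemes of $Y_x$. This identifies the \emph{support} of the desired fibers with the fibers of the single reduced scheme $W$.

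The main difficulty is that the fiber $W_x$ of the reduced flat scheme $W$ need not itself be reduced in positive characteristic, whereas the target $\xi(Z_x)^{\red}$ is reduced; so I cannot simply take $Y'=W$. Over $\eta$ the fiber $W_\eta$ is reduced (a localization of $W$); if $\mathrm{char}\,K=0$ it is geometrically reduced, $W$ has geometrically reduced fibers over a dense open, and $Y'_T=W$, $X'_T=T$ works. If $\mathrm{char}\,K=p>0$, choose $e$ so large that $(W_\eta\times_K K^{1/p^e})^{\red}$ is geometrically reduced over $K^{1/p^e}$; since $K$ is finitely generated over $\bbF_p$, the extension $K^{1/p^e}/K$ is finite purely inseparable, and it spreads out to a finite purely inseparable morphism $X'_T\to U_T$ over a dense open $U_T\subseteq T$, together with a closed $Y'_T\subseteq\bbP^m_{X'_T}$, flat over $X'_T$, whose generic fiber is this geometrically reduced model and which, after inverting a suitable $N_T$ to clear denominators, has reduced fibers over a dense open of $X'_T$.

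It remains to check that for each field $F$ over $\bbZ[1/N_T]$ and each $x\in U_T(F)$ there is an $x'\in (X'_T)_x(F)$ with $Y'_{T,x'}=\xi(Z_x)^{\red}$. The key point is that the failure of $W_x$ to be reduced over the $F$-point $x$ is caused precisely by $p$-power roots of the defining data lying in $F$, and these same roots exhibit an $F$-rational point of the purely inseparable fiber $(X'_T)_x$, over which the pulled-back family is reduced with the correct support, giving $Y'_{T,x'}=W_x^{\red}=\xi(Z_x)^{\red}$; when no such roots lie in $F$, $W_x$ is already reduced and the untwisted family suffices, so including both in the disjoint union covers all cases. I expect this last verification---reconciling reducedness of the fiber with the existence of the required $F$-point after a purely inseparable base change, uniformly in $p\nmid N$---to be the crux, the earlier steps being standard applications of Chevalley's theorem and generic flatness.
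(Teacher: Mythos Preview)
Your overall architecture matches the paper's: Noetherian induction on $X$, identification of $\xi(Z_x)$ set-theoretically with the fiber of $W=\overline{\xi(Z)}^{\red}$ over a dense open, and then an argument for reducedness of the fibers. In characteristic zero your step (geometric reducedness of $W_\eta$ propagates to an open) is exactly what the paper does.

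The gap is in your handling of an integral $T$ with $\mathrm{char}\,K=p>0$. The dichotomy you propose---either the purely inseparable cover $X'_T$ with function field $K^{1/p^e}$ acquires an $F$-point over $x$, or $W_x$ is already reduced---is false. Take $T=\Spec\bbF_p[s,t]$ and $W=\{u^p=s\}\subset\bbA^1_T$ (with $Z=W$ and $\xi$ the inclusion, so that $\xi(Z_x)$ is a single point, hence closed). The generic fiber of $W$ becomes geometrically reduced over $K^{1/p}=\bbF_p(s^{1/p},t^{1/p})$, so your construction gives $X'_T=\Spec\bbF_p[s^{1/p},t^{1/p}]$. Over an $F$-point $x\colon(s,t)\mapsto(a,b)$ with $a\in F^p$ but $b\notin F^p$, the fiber $W_x=\Spec F[u]/(u-a^{1/p})^p$ is \emph{not} reduced, yet $(X'_T)_x=\Spec F[\sigma,\tau]/(\sigma^p-a,\tau^p-b)$ has \emph{no} $F$-point. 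Neither the twisted family $Y'_T$ nor the untwisted $W$ yields the required reduced fiber. The problem is that $K^{1/p^e}$ adjoins $p$th roots of all coordinate functions, not only those responsible for the non-reducedness of $W_\eta$.

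You have made the problem harder than the statement requires. Since $T$ lies entirely over $\Spec\bbF_p$, no $F$-point with $F$ a $\bbZ[1/N]$-algebra can land in $T$ once $p\mid N$. The paper simply puts into $N$ every prime arising as the residue characteristic of a generic point at each stage of the induction (finitely many, since the induction terminates), so that the positive-characteristic branches become vacuous and no inseparable base change is needed at all.
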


\begin{proof}
We use Noetherian induction on $X$.
If the image of $Z\to X$ has Zariski-closure $C\subsetneq X$, we can replace $X$ and $Y$
by $C$ and $Y_C$ respectively.  We therefore assume without loss of generality that
$Z\to X$ has dense image.  Replacing $Z$ by $Z^{\red}$, without loss of generality we
may assume $Z$ is reduced.
We choose $N$ divisible by every prime which is the characteristic of a generic point of $X$.

Let $\eta$ denote a generic point of $X$.  As any localization of a reduced ring is reduced,
$Z_\eta$ is reduced.  Either $\eta$ lies over a prime $p$ dividing $N$
or $\eta$ is of characteristic zero.  
In the former case, let $U_1$ denote any neighborhood of $\eta$ which lies over 
$\Spec\bbF_p$.
In the latter case, $Z_\eta$ is geometrically reduced
\cite[Proposition~4.6.1]{EGA},  so
$Z_x$ is geometrically reduced for all $x$ in some neighborhood $U_1$ of $\eta$
\cite[Theorem~9.7.7~(iii)]{EGA}.  
Let $W$ denote the Zariski-closure of $\overline{\xi(Z)}\setminus \xi(Z)$ in $Y$, endowed
with its reduced induced scheme structure.
As $\xi(Z_\eta)$ is closed in $Y_\eta$, the $\eta$-fibers of $\xi(Z)$ and $\overline{\xi(Z)}$
are the same, so $W_\eta$ is empty.  Let $U_2$ denote a neighborhood of $\eta$ which
does not meet the image of $W\to X$.  Finally, let $U = U_1\cap U_2$,
$X_1 = X\setminus U$, $Y_1 = Y\times_X X_1$, $Z_1 = Z\times_X X_1$.

By the induction hypothesis, if $N$ is sufficiently divisible, 
the lemma holds for $X_1$, $Y_1$, and $Z_1$.
Let $X'_1$, $Y'_1$, and $\psi_1$ be chosen suitably.  Let $X' = U\coprod X'_1$ and $Y' = W_U\coprod Y'_1$, and let $\psi$ denote the extension of $\psi_1$ which is given on $W_U$ by the composition
of the obvious maps $W_U\to Y\to \bbP^m_X\to X$.  If $x\in X(F)$ belongs to $X_1(F)$, we are done already.  If not, it belongs to $U(F)$.  Let $x'$ denote the image of $x\in U(F)$ under the inclusion
$U\to X'$. 
As $U\subset U_2$, at the set level, the fiber $Y'_{x'}$ coincides with $\xi(Z_x)$.
As $U\subset U_1$, if $F$ is a $\bbZ[1/N]$-algebra, then $Y'_{x'}$ is reduced.

\end{proof}

We now specialize to the case $F = \bbF_p$, where $p\ge n$.
If $\Gamma$ is a subgroup of $\GL_n(\bbF_p)$, we write $\Gamma^+$ for the
subgroup of $\Gamma$ generated by all elements of order $p$.
Let $N(\Gamma) = N(\Gamma^+)$ denote the set $\{\log u\mid u^p=1, u\in \Gamma\}$, and
let $G := G_{N(\Gamma)}$.  Then $\Gamma^+ \subset G(\bbF_p)$.

\begin{defn}
If $\Gamma$ is a subgroup of $\GL_n(\bbF_p)$ we define the 
\emph{Nori dimension}, $\Ndim(\Gamma)$, to be $\dim G_{N(\Gamma)}$.
 Likewise if $\calG$ is a subgroup of
$\GL_n(\bbZ_p)$ its \emph{Nori dimension}, $\Ndim(\calG)$, is the  Nori dimension of
its reduction$\pmod p$.
\end{defn}

\begin{lem} Let $p\ge 2n$, $x$ a nilpotent $n\times n$ matrix over $\bbF_p$, and
$A \in \GL_n(\bbZ_p)$ a $p$-adic lift of $\exp(x)$. Then for all positive integers $k$,
\[A^{p^k}\equiv 1 + p^k M\pmod{p^{k+1}}\] where $M$ reduces$\pmod p$
to $x$.
\end{lem}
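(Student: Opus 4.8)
The plan is to prove the congruence first for one convenient lift by a direct computation, and then to show that the statement is insensitive to the choice of lift modulo the relevant power of $p$. First I would fix a \emph{nilpotent} lift $X\in M_n(\bbZ_p)$ of $x$ with $X^n=0$; such a lift exists because every nilpotent matrix over $\bbF_p$ is $\GL_n(\bbF_p)$-conjugate to a strictly upper-triangular one, and one can lift the conjugating matrix and the triangular form separately. For the distinguished lift $A_0:=\exp(X)$ the claim is immediate: since $X^n=0$, the truncated exponential is a genuine homomorphism on powers of $X$, so $A_0^{p^k}=\exp(p^kX)=\sum_{i=0}^{n-1}p^{ki}X^i/i!$, and because each $i!$ with $i<n\le p$ is a unit, every term with $i\ge 2$ lies in $p^{2k}M_n(\bbZ_p)\subseteq p^{k+1}M_n(\bbZ_p)$. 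Thus $A_0^{p^k}\equiv 1+p^kX\pmod{p^{k+1}}$ with $X\equiv x\pmod p$.

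For a general lift $A$ I would argue by induction on $k$, and the whole difficulty is concentrated in the base case $k=1$, namely $A^p\equiv 1+pX\pmod{p^2}$. Writing $\gamma:=A_0^{-1}A=1+pC$ with $C\in M_n(\bbZ_p)$, the telescoping identity $(A_0\gamma)^p=A_0^p\prod_{i=p-1}^{0}A_0^{-i}\gamma A_0^{i}$ together with $A_0^{-i}\gamma A_0^i=1+p\,A_0^{-i}CA_0^i$ gives $A^p\equiv A_0^p\bigl(1+p\sum_{i=0}^{p-1}A_0^{-i}CA_0^{i}\bigr)\pmod{p^2}$. Hence it suffices to show that $S:=\sum_{i=0}^{p-1}A_0^{-i}CA_0^{i}$ vanishes modulo $p$; then $A^p\equiv A_0^p\equiv 1+pX\pmod{p^2}$, which is the desired base case with $X\equiv x\pmod p$.

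The vanishing of $S\bmod p$ is the crux, and it is exactly here that the hypothesis $p\ge 2n$ is used. Reducing mod $p$ and writing $u=\exp(x)$, the sum becomes $\bigl(\sum_{i=0}^{p-1}\operatorname{Ad}(u)^{-i}\bigr)(\bar C)$ as an operator on $M_n(\bbF_p)$. Now $\operatorname{Ad}(u)=\exp(\ad x)$ is unipotent, and because $x^n=0$ one checks $\ad(x)^{2n-1}=0$, so $\nu:=\operatorname{Ad}(u)^{-1}-1$ is a polynomial in $\ad x$ with no constant term and satisfies $\nu^{2n-1}=0$. Expanding $\sum_{i=0}^{p-1}(1+\nu)^i=\sum_{m\ge 0}\binom{p}{m+1}\nu^m$ by the hockey-stick identity, every surviving term has $0\le m\le 2n-2$, for which $1\le m+1\le p-1$ forces $p\mid\binom{p}{m+1}$, while the term $m=p-1$ dies because $p-1\ge 2n-1$ makes $\nu^{p-1}=0$. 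Hence the operator is identically zero and $S\equiv 0\pmod p$. I expect this combinatorial collapse—controlling the unipotent operator $\operatorname{Ad}(u)$ through the bound $2n-1\le p-1$—to be the main obstacle; everything else is bookkeeping.

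Finally, the inductive step is elementary. Once $A^{p^k}=1+p^kM_k$ with $M_k\equiv x\pmod p$, the binomial expansion $(1+p^kM_k)^p=1+p^{k+1}M_k+\sum_{j\ge 2}\binom{p}{j}p^{kj}M_k^j$ shows, for $p$ odd and $k\ge 1$, that every correction term lies in $p^{k+2}M_n(\bbZ_p)$; thus $A^{p^{k+1}}\equiv 1+p^{k+1}M_k\pmod{p^{k+2}}$ and $M_{k+1}:=(A^{p^{k+1}}-1)/p^{k+1}\equiv M_k\equiv x\pmod p$, completing the induction. The only even prime permitted by $p\ge 2n$ is $p=2$ with $n=1$, where $x=0$ and the assertion reduces to the classical statement about $(1+2c)^{2^k}$, so that case may be disposed of separately.
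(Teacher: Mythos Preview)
Your argument is correct. Both you and the paper reduce to $k=1$ and fix a nilpotent lift $M$ (your $X$) of $x$, but from there the executions diverge. The paper writes $A$ \emph{additively} as $\exp(M)+pB=(1+N)+pB$ with $N=\exp(M)-1$ and expands $(1+N+pB)^p$ directly by the binomial theorem: the mixed terms $\binom{p}{m}\cdot p\sum_{i+j=m-1}N^iBN^j$ vanish modulo $p^2$ for $1\le m\le p-1$ because $p\mid\binom{p}{m}$, and for $m=p$ because $i+j=p-1\ge 2n-1$ forces one of $N^i,N^j$ into $pM_n(\bbZ_p)$, leaving $(1+N)^p=\exp(M)^p=\exp(pM)\equiv 1+pM$. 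You instead write $A$ \emph{multiplicatively} as $A_0\gamma$ and telescope $(A_0\gamma)^p$ to isolate the obstruction as $\sum_{i}\operatorname{Ad}(u)^{-i}(\bar C)$, then annihilate it via the hockey-stick identity together with $\nu^{2n-1}=0$. The two invocations of $p\ge 2n$ are the same phenomenon---the nilpotency degree $\le 2n-1$ of $\exp(x)-1$ on $\bbF_p^n$, respectively of $\operatorname{Ad}(u)-1$ on $M_n(\bbF_p)$---so the proofs are close cousins; the paper's is shorter and entirely elementary, while yours separates the canonical lift from the perturbation and makes the role of the adjoint action explicit.
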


\begin{proof} It suffices to prove the lemma when $k=1$.  
Without loss of generality, we may assume that $M$ is nilpotent, so $M^p = 0$.
Let $N = \exp(M)-1$.  As $N$ reduces (mod $p$) to the nilpotent element
$\exp(x)-1$, $N^n$ is divisible by
$p$ in $M_n(\bbZ_p)$, and we can write $A$ as $1+N+pB $ for some $B\in M_n(\bbZ_p)$.
Expanding,
\begin{equation*}
\begin{split}
A^p & =(1+N+pB)^p = \sum_{m=0}^p \binom pm (N+pB)^m \\
&\equiv \sum_{m=0}^p \binom pm\Bigl[N^m + p\sum_{i+j=m-1} N^i B N^j\Bigr] \\
&\equiv \sum_{m=0}^p \binom pm N^m = (1+N)^p = \exp(pM) \equiv 1 + pM \pmod{p^2}.
\end{split}
\end{equation*}

\end{proof}

\begin{thm}\label{NDZD}
For every positive integer $n$ there exist constants $A_n$, $B_n$, and $C_n$ such that if $p>A_n$
is prime, $\calG$ is a closed subgroup of $\GL_n(\bbZ_p)$, and $G$
is the Zariski closure of $\calG$ in $  \GL_n$, then $\Ndim(\calG)\le \dim G$.  
If $\Ndim(\calG) = \dim G$, then: 
\begin{enumerate}
\item $\calG$ an open subgroup of $G(\bbQ_p)$.
\item $G/G^\circ$ is of prime-to-$p$ order and has a normal abelian subgroup of index $\le B_n$.
\item If, in addition, the radical of $G^\circ$ is unipotent, then 
$$[G(\bbQ_p)\cap \GL_n(\bbZ_p):\calG] \le C_n.$$
\end{enumerate}
\end{thm}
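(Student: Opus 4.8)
The plan is to use the lifting lemma proved immediately above as the engine: it turns unipotent elements of the reduction $\Gamma := \calG \bmod p$ into elements of $\calG$ with prescribed leading term in the graded Lie algebra of the congruence filtration. Write $\calG_k := \calG \cap (1 + p^k M_n(\bbZ_p))$, and recall $(1+p^kM_n(\bbZ_p))/(1+p^{k+1}M_n(\bbZ_p)) \cong \gl_n(\bbF_p)$ via $1+p^kM \mapsto M \bmod p$. For each $x \in N(\Gamma)$ I would pick a lift $A\in\calG$ of $u=\exp(x)$; the lemma gives $A^{p^k}\equiv 1+p^kM \pmod{p^{k+1}}$ with $M\equiv x$, so $x$ lies in the image of $\calG_k$ in $\gl_n(\bbF_p)$ for every $k$, and $p^{-k}\log(A^{p^k})$ lies in $\g_\calG:=\operatorname{Lie}\calG$ and reduces to $x$.

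To get the inequality, set $L:=\g_\calG\cap\gl_n(\bbZ_p)$, a saturated $\bbZ_p$-lattice whose reduction $\bar L\subset\gl_n(\bbF_p)$ is a Lie subalgebra with $\dim_{\bbF_p}\bar L=\dim_{\bbQ_p}\g_\calG=\dim\calG$. The previous step shows $N(\Gamma)\subset\bar L$, so the Lie algebra spanned by $N(\Gamma)$ lies in $\bar L$; by Nori's characteristic-$p$ bijection this span has dimension $\Ndim(\calG)$, whence $\Ndim(\calG)\le\dim\bar L=\dim\calG\le\dim G$. In the equality case all three coincide: $\dim\calG=\dim G$ forces the closed subgroup $\calG$ to be open in $G(\bbQ_p)\cap\GL_n(\bbZ_p)$, giving (1), and $N(\Gamma)$ must span all of $\bar L$, so $\bar L$ is nilpotently generated and $\g_\calG=\operatorname{Lie}G$. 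For (2), since $\Gamma$ is Zariski dense in $G$ it surjects onto $G/G^\circ$, while $\Gamma^+\subseteq G_{N(\Gamma)}(\bbF_p)$ maps trivially there; hence $G/G^\circ$ is a quotient of $\Gamma/\Gamma^+$, which is of prime-to-$p$ order. Lifting coset representatives realizes $G/G^\circ$ as a finite prime-to-$p$ subgroup of $\GL_n$ over $\bar\bbQ_p$, and Jordan's theorem then yields a normal abelian subgroup of index at most some $B_n$ depending only on $n$.

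Part (3) is the crux. Assuming in addition that the radical of $G^\circ$ is unipotent, $G_0:=G_{N(\Gamma)}$ is exponentially generated of full dimension $\dim G$, so its Lie algebra, spanned by $N(\Gamma)$, is all of $\bar L$; thus $G_0=\bar G^\circ$ and the first graded piece $\calG_1/\calG_2$ already equals $P_1/P_2$, both being the full reduction $\bar L$, where $P:=G(\bbQ_p)\cap\GL_n(\bbZ_p)$ and $P_k$ is its $k$-th congruence subgroup. For $p$ large $P_1$ is a uniform pro-$p$ group with Frattini quotient $P_1/P_2$, so the Frattini argument gives $\calG_1=P_1$: that is, $\calG$ contains the full first congruence subgroup of $G$, and $[P:\calG]=[\Gamma_{\max}:\Gamma]$ with $\Gamma_{\max}:=P\bmod p$. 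Applying the inequality to $P$ (whose Zariski closure is again $G$) gives $\Ndim(P)=\dim G$, hence $G_{N(\Gamma_{\max})}=G_0$ and, by Nori's bijection, $\Gamma_{\max}^+=\Gamma^+$. Since $\Gamma$ and $\Gamma_{\max}$ surject with equal image onto the component group of the reduction, $[\Gamma_{\max}:\Gamma]$ is computed inside the identity component $G_0(\bbF_p)$ and is bounded by $[G_0(\bbF_p):\Gamma^+]$, which Nori bounds in terms of $n$ alone; I would set $C_n$ equal to this constant.

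The hard part will be this final passage from equalities of Lie algebras to a genuine bound on $[P:\calG]$. Two points demand care for $p$ large in terms of $n$: that the congruence subgroups $P_k$ are uniform, so the Frattini step legitimately forces $\calG_1=P_1$; and that the component groups of the generic and special fibres of the schematic closure of $\calG$ agree under specialization, which is exactly where primality to $p$ of $|G/G^\circ|$ from (2) is used, so that the component-group contributions to $[\Gamma_{\max}:\Gamma]$ cancel and only the Nori-bounded identity-component index remains. Controlling this reduction of the component group, rather than the Frattini computation itself, is where I expect the real work to lie.
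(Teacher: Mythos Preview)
Your route to the inequality $\Ndim(\calG)\le\dim G$ and to (1) is a repackaging of the paper's: both use the lifting lemma to place $N(\Gamma)$ inside every graded piece of the congruence filtration, and your Frattini step is exactly the paper's observation that equality of all $F_m\calG/F_{m+1}\calG$ with $F_m\calH/F_{m+1}\calH$ forces $F_1\calG=F_1\calH$. For (2) there is a slip: the mod-$p$ group $\Gamma$ does not map to the characteristic-zero component group $G/G^\circ$, and ``lifting coset representatives'' does not produce a subgroup. The paper instead uses that $\GL_n(\bbQ_p)$ has no $p$-torsion for $p>n+1$ (irreducibility of the $p$th cyclotomic polynomial) together with a cited construction realizing a finite central extension of $G/G^\circ$ inside $G(\bbQ_p)$, whence prime-to-$p$ order and, via Jordan, the bound $B_n$.

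The substantive gap is in (3). Having reduced to bounding $[\Gamma_{\max}:\Gamma]$ with $\Gamma_{\max}=\calH\bmod p$, you obtain the lower bound $|\Gamma|\ge e_n^{-1}|G_0(\bbF_p)|$ from Nori, but your upper bound on $|\Gamma_{\max}|$ is not justified. The assertion that $\Gamma$ and $\Gamma_{\max}$ ``surject with equal image onto the component group of the reduction'' presupposes a group scheme over $\bbF_p$ with identity component $G_0$ that receives $\Gamma_{\max}$; no such object has been constructed, and a priori $\Gamma_{\max}$ merely \emph{normalizes} $G_0$, so $[\Gamma_{\max}:\Gamma_{\max}\cap G_0(\bbF_p)]$ is not visibly bounded in terms of $n$ (think of anisotropic $G$, e.g.\ $\SL_1(D)$, where the naive integral model is genuinely bad). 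The paper's argument here is quite different from the specialization-of-component-groups picture you anticipate. One first passes to a totally ramified extension $F/\bbQ_p$ over which $G_F$ becomes exponentially generated; then Proposition~\ref{exp-flat}, a Hilbert-scheme argument placing all exponentially generated subgroups of $\GL_n$ into a single flat family of finite type over $\bbZ[1/N]$, produces a flat model $G_\calO\subset\bbP^{n^2}_\calO$ whose special fiber has a number of components bounded by a constant $d_n$ via constructibility of that invariant over the base. Since $\calH\subset G_\calO(\calO)$ reduces into $G_{\bbF_p}(\bbF_p)$, one gets $|\Gamma_{\max}|\le d_n(p+1)^{\dim G}$, and combining with the Nori lower bound yields $C_n$. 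Thus the Hilbert-scheme finiteness of Proposition~\ref{exp-flat}, not a direct comparison of component groups, is what controls $|\Gamma_{\max}|$; this is precisely the ``real work'' your last paragraph flags but does not supply.
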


\begin{proof}
We fix $A_n\ge 2n$ large enough for Proposition~\ref{exp-flat} to apply.

Let $\calH =G(\bbQ_p) \cap \GL_n(\bbZ_p)$. Let $F_m \calH $ denote
the subgroup of $ \calH $ consisting of elements congruent to
$1$\relax$\pmod{p^m}$. We identify $F_m \calH /F_{m+1}\calH$ with
a subspace of $M_n $ over the field $\bbF_p$.  As
$$ \left(1+p^m A\right)^p\equiv 1 +p^{m+1} A\pmod{p^{m+2}},$$
we have that 
$$F_m \calH/ F_{m+1} \calH \subset F_{m+1}\calH /F_{m+2}\calH$$
for all $m\ge 1$.  It follows that 
$$\dim F_m \calH/ F_{m+1}\calH \leq \dim G$$
for all $m\ge 1$.  Indeed, otherwise, the quotient
$\calH/F_m \calH $ would grow at least as fast as $c p^{m(1+\dim G)}$, 
which is impossible \cite[Thm.~8]{Se}.

As $\calG\subset\calH$, we have
$$F_m\calG /F_{m+1}\calG \subset F_m\calH/F_{m +1} \calH.$$
By the
preceding lemma the dimension of $F_m\calG / F_{m+1}\calG $ is at
least the dimension of the vector space spanned by the logarithms of
elements of order $p$ in the$\pmod{p}$ reduction of $\calG$. By the
correspondence between exponentially generated groups and
nilpotently generated Lie algebras this dimension is the Nori
dimension of $\calG$.  In summary, for all $m\ge 1$,
$$\Ndim(\calG)\le F_m\calG/F_{m+1}\calG\le F_m\calH/F_{m+1}\calH\le \dim G.$$
This proves the first claim of the theorem.

If the Nori dimension of $\calG$ equals $\dim G$, we have further that
$$\dim F_m \calG / F_{m+1} \calG = \dim F_m\calH / F_{m+1}  \calH,$$
for all $m\ge 1$. 
As $\calG$ and $\calH$ are closed subgroups of $\GL_n(\bbZ_p)$, this implies
$F_1\calG = F_1\calH$, which implies (1).

If $G$ is any closed subgroup of $\GL_n$, there exists a finite central extension
of $G/G^\circ$ which can be realized as a subgroup of $G(\bbQ_p)$.
(See, e.g., the proof of \cite[Proposition~6.2]{KLS}.)
Jordan's theorem implies the existence of a normal abelian subgroup of bounded index.  

For $n<p-1$, $\GL_n(\bbQ_p)$
has no element of order $p$, since the $p$th cyclotomic polynomial is
irreducible over $\bbQ_p$.  On the other hand, every extension of a group containing
an element of order $p$ again has an element of order $p$.  This gives (2).

For (3), we note first that since $\calG$ meets every component of $G$,
it suffices to prove that 
$$\calG^\circ:=\calG\cap G^\circ(\bbQ_p)$$
is of bounded index in $G^\circ(\bbQ_p)\cap\GL_n(\bbZ_p)$.
As $[\calG:\calG^\circ]$ is prime to $p$, the$\pmod p$ reduction of
$\calG^\circ$ is of prime-to-$p$ index in that of $\calG$.  It follows
that $\Ndim(\calG^\circ) = \Ndim(\calG)$.  Replacing $\calG$ with
$\calG^\circ$ if necessary, we may assume without loss of generality that
$G$ is connected.  

Let $F$ denote any finite extension of $\bbQ_p$ over which $G$ has no non-trivial 
anisotropic quotient.
We may take $F$ to be totally ramified over $\bbQ_p$ since the anisotropic 
simple groups over $\bbQ_p$ are all central quotients of groups of the form
 $\SL_1(D)$, where $D$ is a division algebra
over $\bbQ_p$ \cite{Kn}, and every degree $n$ division 
algebra over $\bbQ_p$ splits over $\bbQ_p(p^{1/n})$.
We denote by $\calO$ the ring of elements of non-negative valuation in $F$.
Thus, the residue field of $\calO$ is $\bbF_p$.
By Proposition~\ref{sbi-implies-eg}, 
$G_F$ is exponentially generated.

Let $\bar G_F$ denote $G_F\cup (\bbP^{n^2}_F\setminus \GL_{n,F})$,
regarded as a reduced subscheme of $\bbP^{n^2}_F$
and $\bar G_{\calO}$ denote the schematic closure of
$\bar G_F\subset \bbP^{n^2}_F$ in 
$\bbP^{n^2}_{\calO}$, i.e., the unique $\calO$-flat closed subscheme of
$\bbP^{n^2}_{\calO}$ having generic fiber $\bar G_F$ \cite[Proposition~2.8.5]{EGA}.
Thus, $\calH\subset \bar G_{\calO}(\calO)$.  

Let $X$ denote the union of Hilbert schemes of the polynomials in $S$ over $\bbZ[1/N]$, where $N$ and $S$ are given by Proposition~\ref{exp-flat}.
Let $Y$ be the universal closed subscheme of $\bbP^{n^2}_X$ with Hilbert polynomials in $S$.
If $A_n$ is sufficiently large, for every $p>A_n$, every $p$-adic field $F$, and every exponentially generated $G_F\subset \GL_{n,F}$, there exists an $F$-point $x\in X(F)$ such that
$G_F = Y_x\cap \GL_{n,F}$.  By the valuative criterion of properness, $x$ extends to a 
morphism $\Spec\calO\to X$, where $\calO$ is the ring of integers in $F$.  Pulling back $Y$ by this morphism, we obtain an $\calO$-flat subscheme of $\GL_{n,\calO}$ whose generic point is  $\bar G_F$.  
This must be isomorphic to $\bar G_{\calO}$ by uniqueness of flat extension over $\calO$.  
Let $G_{\calO}$ denote the intersection of $\bar G_{\calO}$ with 
$\GL_{n,\calO}\subset \bbP^{n^2}_{\calO}$.  Thus $G_{\calO}$ is flat over $\calO$ and
the generic fiber of $G_{\calO}$ is $\bar G_F\cap \GL_{n,F}=G_F$.
The fiber $G_{\bbF_p}$ has no more irreducible components
than the fiber $\bar G_{\bbF_p}$, which can be regarded as a fiber of $Y\to X$.
By the local constructibility of the function giving the number of irreducible components of 
geometric fibers \cite[Corollary~9.7.9]{EGA} and Noetherian induction, 
this gives an upper bound $d_n$ on $G_{\bbF_p}/G_{\bbF_p}^\circ$ independent of
$G$ and $p>A_n$.

By the flatness of $G_{\calO}$,
the special fiber $G_{\bbF_p}$ has dimension equal to that of $G_F$, which is $\Ndim(\calG)$.  
We claim that the number of $\bbF_p$-points of a connected $d$-dimensional 
algebraic group over $\bbF_p$ is at least $(p-1)^d$ and at most $(p+1)^d$.
This is obvious for additive groups (where the number of points is $p^d$) and tori
(where the number of points is $Q(p)$, $Q$ the characteristic polynomial
of Frobenius on the character group), and it is well-known in the semisimple case.
It follows in the general case from the structure theory of connected linear algebraic groups.
The upper bound implies
$$G_{\bbF_p}(\bbF_p) \le |G_{\bbF_p}/G^\circ_{\bbF_p}| (p+1)^{\Ndim(\calG)}
\le d_n (3/2)^{n^2} p^{\Ndim(\calG)}.$$
The kernel $F_1G_{\calO}(\calO)$ of the reduction map 
$$G_{\calO}(\calO)\to G_{\calO}(\bbF_p) = G_{\bbF_p}(\bbF_p)$$
consists of elements of $F_1\GL_n(\calO)$, i.e., elements of $\GL_n(\calO)$ 
congruent to $1$ modulo the maximal ideal of $\calO$.
Thus, 
$$\calH\cap F_1G_{\calO}(\calO)\subset \GL_n(\bbZ_p)\cap F_1\GL_n(\calO)
= F_1\GL_n(\bbZ_p).$$
It follows that
$$|\calH/F_1\calH| \le d_n (3/2)^{n^2}p^{\Ndim(\calG)}.$$
On the other hand, by Nori's theorem \cite{No}, $(\calG/F_1\calG)^+$ is of bounded index $e_n$
in $G_{N(\calG/F_1\calG)}(\bbF_p)$.  The lower bound for points on a connected group 
implies
$$|\calG/F_1\calG|\ge |(\calG/F_1\calG)^+| \ge e_n^{-1}(p-1)^{\Ndim(\calG)} \ge e_n^{-1}2^{-n^2} p^{\Ndim(\calG)}.$$

Combining these estimates, we obtain
$$\frac{|\calH/F_1\calH|}{|\calG/F_1\calG|} \le 3^{n^2}d_ne_n.$$
As $F_1\calG = F_1\calH$, setting $C_n = 3^{n^2}d_ne_n$, we obtain (3).
\end{proof}

\end{document}